\documentclass{amsart}
\usepackage{amsfonts}
\usepackage{amsmath}
\usepackage{enumerate}
\usepackage{amsthm}
\usepackage{hyperref}
\usepackage{cite}

\begin{document}

\newtheorem{definition}{Definition}[section]
\newtheorem{theorem}[definition]{Theorem}
\newtheorem{proposition}[definition]{Proposition}
\newtheorem{remark}[definition]{Remark}
\newtheorem{lemma}[definition]{Lemma}
\newtheorem{corollary}[definition]{Corollary}
\newtheorem{example}[definition]{Example}

\numberwithin{equation}{section}

\title[A note on the Chern-Yamabe flow...]{A note on the Chern-Yamabe flow on compact Hermitian manifolds with negative Gauduchon degree}
\author{Weike Yu}

\thanks{The author is supported by NSFC (No. 12501075), Basic Research Program of Jiangsu (No. BK20250644), Natural Science Foundation of the Jiangsu Higher Education Institutions of China (No. 25KJB110008), the start-up research funds from Nanjing Normal University with account No. 184080H201B160.}
\date{}

\begin{abstract}
In this note, we prove that on a compact Hermitian manifold with negative Gauduchon degree, the Chern-Yamabe flow introduced in our recent work \cite{[Yu1]} converges to the solution of prescribed Chern scalar curvature problem, provided that the candidate curvature function is nonpositive and not identically zero. 
\end{abstract}
\keywords{Prescribed Chern scalar curvature problem;  Hermitian manifold; Chern-Yamabe flow.}
\subjclass[2010]{53C55, 58J35.}
\maketitle
\section{Introduction}
Suppose that $(M^n, J, h)$ is a compact complex manifold of complex dimension $n\geq2$ equipped with a complex structure $J$ and a Hermitian metric $h$. The fundamental form $\omega$ of $h$ is defined by $\omega(\cdot, \cdot)=h(J\cdot, \cdot)$.  In this paper, we shall identify the Hermitian metric $h$ with its corresponding fundamental form $\omega$. For a Hermitian metric $\omega$, there exists a $1$-form $\theta$ on $M$, which is called the Lee form or torsion $1$-form, such that
\begin{align}
d\omega^{n-1}=\theta\wedge \omega^{n-1}.
\end{align}
A Hermitian metric $\omega$ is said to be Gauduchon if $d^*\theta=0$, where $d^*$ denotes the adjoint of $d$ with respect to $\omega$. In particular, it is said to be balanced if $\theta=0$. 

On a Hermitian manifold $(M^n, \omega)$, there is a unique linear connection $\nabla^{Ch}$ (called the Chern connection) preserving both the Hermitian metric and the complex structure, whose torsion $T^{Ch}$ has vanishing $(1,1)$-part everywhere. The scalar curvature with respect to $\nabla^{Ch}$ (referred to as the Chern scalar curvature) is given by
\begin{align}
S^{Ch}(\omega)=\text{tr}_{\omega}Ric^{(1)}(\omega)=\text{tr}_{\omega}\sqrt{-1}\bar{\partial}\partial \log{\omega^n}, 
\end{align}
where $Ric^{(1)}(\omega)$ denotes the first Chern Ricci curvature of $(M, \omega)$ and $\omega^n$ denotes the volume form of $\omega$.

On a compact Hermitian manifold $(M^n, \omega)$, the following problem is a Hermitian analogue of prescribing scalar curvature: 
\begin{center}
\begin{minipage}{8cm}
Given a smooth real-valued function $g$ on $M$, does there exist a Hermitian metric $\hat{\omega}$ conformal to $\omega$, i.e., $\hat{\omega}=e^{\frac{2}{n}u}\omega$, such that its Chern scalar curvature $S^{Ch}(\hat{\omega})=g$? 
\end{minipage}
\end{center}
According to \cite{[ACS]}, the above problem is equivalent to solving the following partial differential equation:
\begin{align}
-\Delta^{Ch}_\omega u+S^{Ch}(\omega)=ge^{\frac{2}{n}u},\label{1.3}
\end{align}
where $\Delta^{Ch}_\omega$ denotes the Chern Laplacian with respect to $\omega$. If $g$ is a constant, the above problem is referred to as the Chern-Yamabe problem, which was first proposed by Angella-Calamai-Spotti in \cite{[ACS]}; they proved that it is solvable on any compact Hermitian manifold with Gauduchon degree $\Gamma(\{\omega\})\leq 0$ (see \eqref{2.6.}).  If $g$ is a general smooth function, this problem on compact (almost) Hermitian manifolds has been studied by several authors (cf. \cite{[LM], [CZ], [Ho], [Fus], [Bar], [Yu1], [WZ1], [WZ2], [LU], [LZZ]}).

In \cite{[ACS]}, the authors proposed using the following flow (called the Chern-Yamabe flow) to tackle the above Chern-Yamabe problem:
\begin{equation}
\left\{
\begin{aligned}
&\frac{\partial u}{\partial t}=\Delta^{Ch}_\omega u-S(\omega)+\lambda e^{\frac{2}{n}u},\\
&u(x,0)=0,
\end{aligned}
\right.
\end{equation}
where $\lambda$ is a constant. Later, Lejmi-Maalaoui \cite{[LM]} proved that when the Gauduchon degree is negative, the above flow converges to a solution of the Chern-Yamabe problem, which recovered the result of Angella-Calamai-Spotti. In \cite{[CZ]}, Calamai-Zou introduced a different flow to study the Chern-Yamabe problem. Using geometric flows related to the Calamai-Zou flow, Ho \cite{[Ho]} studied the prescribed Chern scalar curvature problem on compact balanced Hermitian manifolds with negative Chern scalar curvature. 

In \cite{[Yu1]}, we studied the following geometric flow:
\begin{equation}
\left\{
\begin{aligned}
&\frac{\partial u}{\partial t}=\Delta^{Ch}_\eta u-s_0+ge^{\frac{2}{n}u},\\
&u(x,0)=0,
\end{aligned}
\right.
\label{2.1}
\end{equation}
where $\eta$ is the unique Gauduchon representative in the conformal class $\{\omega\}$ with volume $1$ (see Theorem \ref{theorem2.2}), $s_0$ is the Chern scalar curvature with respect to $\eta$, and $g$ is a smooth function on $M$ with $g\leq 0$ and  $g\not\equiv0$. Indeed, we proved that

 \begin{theorem}[\cite{[Yu1]}]\label{thm1.1}
Let $(M^n,\omega)$ be a compact Hermitian manifold with complex dimension $n\geq 2$ and Gauduchon degree $\Gamma (\{\omega\})<0$, and assume that $g\in C^\infty(M)$ is nonpositive and not identically zero. Then the flow \eqref{2.1} has a unique smooth solution $u$ defined on $M\times [0,+\infty)$. Furthermore, suppose either
\begin{enumerate}
\item $g$ is negative on $M$,
\end{enumerate}
or
\begin{enumerate}[\ \ \ \ (2)]
\item $g$ is nonpositive and not identically zero on $M$, and $\{\omega\}$ contains a balanced metric,
\end{enumerate}
then there exists a sequence $t_i\rightarrow +\infty$ such that $u(\cdot,t_i)$ converges to $u_\infty$ in $C^{1,\alpha}(M)$, where $\alpha\in (0,1)$, and $u_\infty$ is the unique smooth solution of 
\begin{align}\label{1.6.}
-\Delta^{Ch}_\eta u_\infty+s_0=ge^{\frac{2}{n}u_\infty}.
\end{align}
\end{theorem}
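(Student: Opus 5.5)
Since Theorem \ref{thm1.1} is quoted from \cite{[Yu1]}, I would reconstruct its proof by the standard parabolic route: short-time existence, a priori estimates uniform in $t$, and then convergence along a sequence.

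\textbf{Existence and uniqueness.} On $(M,\eta)$ the operator $\Delta^{Ch}_\eta$ is elliptic with no zeroth-order term. For the Gauduchon metric $\eta$ it differs from the Riemannian Laplacian only by a first-order term $\langle\theta,du\rangle$. Hence \eqref{2.1} is a semilinear parabolic equation, and standard theory gives a unique smooth solution on a maximal interval $[0,T)$. To show $T=+\infty$, I would establish a $C^0$ bound on every finite interval. The upper bound comes from the maximum principle: at a spatial maximum, $\partial_t u\le -s_0+ge^{\frac2n u}\le -s_0\le \max|s_0|$, since $g\le0$. So $u\le Ct$.

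The lower bound uses $\Gamma(\{\omega\})<0$. Write $s_0=\Gamma+\Delta^{Ch}_\eta f$ for a suitable $f$; this is possible because $\int s_0\,\eta^n=\Gamma$ and $\Delta^{Ch}_\eta$ has image equal to the functions of zero $\eta$-mean, as $\eta$ is Gauduchon. Then $v=u+f$ satisfies
\begin{align*}
\partial_t v=\Delta^{Ch}_\eta v-\Gamma+ge^{\frac2n(v-f)}.
\end{align*}
At a minimum of $v$ the negative constant $-\Gamma>0$ competes with $ge^{\frac2n(v-f)}$, which is small when $v$ is very negative. So $\min v$ cannot drop below a fixed constant, and we get a uniform lower bound $u\ge -C$. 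Once $C^0$ is controlled on finite intervals, parabolic $L^p$ and Schauder theory bootstrap to all higher derivatives, so the flow extends and $T=+\infty$.

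\textbf{Uniform upper bound: the main obstacle.} Convergence needs $\sup u$ bounded independently of $t$. In case (1), $g\le -c<0$, so at a spatial maximum $\partial_t u\le \max|s_0|-ce^{\frac2n u}$. This forces $\max u\le C$ uniformly. Case (2), where $g$ may vanish on an open set, is the genuinely hard part: the maximum principle gives no decay there. I would first use balancedness in the conformal class to obtain integral control. One option is to integrate $e^{-\frac2n u}$-weighted versions of the equation against $\eta^n$; the other is to compare with the stationary solution $u_\infty$, whose existence for nonpositive $g$ is known from the elliptic theory (e.g. \cite{[ACS]}-type sub/supersolution arguments). A cleaner route is a supersolution: take $\bar u=u_\infty+A$ with $A\ge0$. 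Since $g\le0$,
\begin{align*}
\Delta^{Ch}_\eta\bar u-s_0+ge^{\frac2n\bar u}=ge^{\frac2n u_\infty}(e^{\frac{2A}{n}}-1)\le 0,
\end{align*}
so $\bar u$ is a stationary supersolution. Choosing $A\ge\max(-u_\infty)$ gives $\bar u\ge 0=u(\cdot,0)$, and the parabolic comparison principle yields $u\le \bar u$ for all $t$. If this works, it gives the uniform upper bound in both cases, and the real work shifts to proving existence and uniqueness of $u_\infty$. I expect that elliptic step to be where the balanced hypothesis enters, e.g. through a Kazdan--Warner type argument requiring $\int g<0$ and the right sign of $\Gamma$.

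\textbf{Convergence.} With uniform $C^0$ bounds, parabolic estimates give uniform $C^{2+\alpha,1+\alpha/2}$ bounds on $[k,k+1]$. Next I would use a Lyapunov-type quantity: for instance $\int(\partial_t u)^2$, or monotonicity of $\partial_t u$ obtained from the linearized equation $\partial_t w=\Delta^{Ch}_\eta w+\frac2n g e^{\frac2n u}w$ with $w=\partial_t u$. The maximum principle then gives $\max w_+$ nonincreasing, and the equation for $w$ has a nonpositive zeroth-order term. From this I would derive $\int_0^\infty\!\!\int_M w^2<\infty$ or $w\to0$ along a sequence $t_i$. Arzel\`a--Ascoli then extracts $u(\cdot,t_i)\to u_\infty$ in $C^{1,\alpha}$, and the limit solves \eqref{1.6.}.

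Uniqueness of solutions of \eqref{1.6.} follows from the maximum principle applied to the difference $u_1-u_2$, using $g\le0$, $g\not\equiv0$, and the strong maximum principle. This identifies the limit independently of the subsequence.
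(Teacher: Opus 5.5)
Your long-time existence argument and the case (1) bound are fine. The gap is the convergence step. That step is the main content of Section 3 of the paper, and you assert it rather than prove it.

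The facts you invoke for $w=\partial_t u$ are that $\max_M w_+$ is nonincreasing and that the zeroth-order coefficient $\frac2n ge^{\frac2n u}$ is nonpositive. These give only a uniform bound on $w$, not decay:
\begin{itemize}
\item the coefficient vanishes wherever $g$ does;
\item a uniform bound on $u$ only makes time averages of $\partial_t u$ tend to zero, which does not produce times $t_i$ with $\partial_t u(\cdot,t_i)\to 0$.
\end{itemize}
The equation is variational only when $\eta$ is balanced, so there is no energy functional to fall back on. The claim $\int_0^\infty\!\int_M w^2<\infty$ therefore needs a real mechanism.

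The paper's mechanism for $f=\partial_t u$ runs as follows. One has $(\partial_t-\Delta^{Ch}_\eta)f^2=\frac4n ge^{\frac2n u}f^2-2|\nabla f|^2$. Because $\eta$ is Gauduchon, Lemma \ref{lemma2.1} gives $\int_M\Delta^{Ch}_\eta f^2\,d\mu_\eta=0$. Hence $\int_M f^2\,d\mu_\eta$ is nonincreasing and $\int_0^\infty\!\int_M|\nabla f|^2<\infty$. Poincar\'e then controls $f-\overline f$, and a bound on $V'$ upgrades time-integrability to $\|f-\overline f\|_{L^2}\to0$. The mean is treated separately through $\overline f=-\Gamma(\{\omega\})+\int_M ge^{\frac2n u}\,d\mu_\eta$. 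Its evolution is $\overline f'=R(t)+\frac2n\overline f(\overline f+\Gamma(\{\omega\}))$ with $\overline f+\Gamma(\{\omega\})\le-\delta<0$ and $R(t)\to0$, which forces $\overline f\to0$. This uses $\Gamma<0$, $g\not\equiv0$ and the uniform lower bound on $u$ essentially, and none of it appears in your sketch.

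Your maximum-principle idea can be completed, but only with an ingredient you do not mention. Since $\frac2n ge^{\frac2n u}\le-\kappa|g|$ with $\kappa=\frac2n e^{-\frac2n C_1}$, compare $\pm w$ on $[k,k+1]$ with $\max_M|w(\cdot,k)|\,z(\cdot,t-k)$. Here $\partial_t z=\Delta^{Ch}_\eta z-\kappa|g|z$ and $z(\cdot,0)=1$. The strong maximum principle gives $\max_M z(\cdot,1)<1$ because $g\not\equiv0$. This yields exponential decay of $\max_M|\partial_t u|$.

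A secondary point concerns case (2). Your uniform upper bound rests on a stationary solution $u_\infty$ whose existence you call both ``known'' and ``the real work.'' If the flow is meant to produce the solution, this is partly circular. Your guess that the balanced hypothesis enters there is also wrong: Lemma \ref{lemma3.1} quotes the uniform bounds on $u$ and $\partial_t u$ from \cite{[Yu1]} without it.

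You do not need $u_\infty$ at all. Write $\Gamma=\Gamma(\{\omega\})$ and define
\begin{itemize}
\item $\psi$ by $\Delta^{Ch}_\eta\psi=s_0-\Gamma$;
\item $\tilde g=ge^{\frac2n\psi}$;
\item $\phi$ by $\Delta^{Ch}_\eta\phi=\tilde g-\int_M\tilde g\,d\mu_\eta$;
\item $a=\Gamma/\int_M\tilde g\,d\mu_\eta>0$.
\end{itemize}
Then $\bar u=\psi-a\phi+b$ satisfies $\Delta^{Ch}_\eta\bar u-s_0+ge^{\frac2n\bar u}=\tilde g\,\bigl(e^{\frac2n(b-a\phi)}-a\bigr)\le0$, and $\bar u\ge0$ for $b$ large. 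Your comparison argument then applies directly.

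Your uniqueness argument via the strong maximum principle is fine. The paper instead multiplies by $w$ and integrates, using $d^*\theta=0$.
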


In this paper, we remove the assumption that $\{\omega\}$ contains a balanced metric in Theorem \ref{thm1.1} (2) and upgrade subsequential convergence to convergence of the full flow. More precisely, we obtain the following theorem.
 \begin{theorem}\label{theorem1.1}
Let $(M^n,\omega)$ be a compact Hermitian manifold with complex dimension $n\geq 2$ and Gauduchon degree $\Gamma (\{\omega\})<0$, and assume that $g\in C^\infty(M)$ is nonpositive and not identically zero. Then the flow \eqref{2.1} has a unique smooth solution $u$ defined on $M\times [0,+\infty)$. Furthermore, $u(\cdot,t)$ converges to $u_\infty$ in $C^{1,\alpha}(M)$ as $t\rightarrow +\infty$, where $\alpha\in (0,1)$, and $u_\infty$ is the unique smooth solution of 
\begin{align}\label{1.7+}
-\Delta^{Ch}_\eta u_\infty+s_0=ge^{\frac{2}{n}u_\infty}.
\end{align}
\end{theorem}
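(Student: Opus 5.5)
Long-time existence and uniqueness are already contained in Theorem \ref{thm1.1}, since that part needs only $\Gamma(\{\omega\})<0$ and $g\le0$, $g\not\equiv0$. What remains is to remove the balanced assumption and to obtain convergence of the full flow. My plan is to avoid energy or Lyapunov functionals, which is where the balanced condition seems to enter (since $\Delta^{Ch}_\eta$ is not self-adjoint when $\eta$ is not balanced). Instead I would use a maximum-principle / sub- and supersolution argument. This only needs the Gauduchon property of $\eta$, through the fact that $\int_M \Delta^{Ch}_\eta f\,\eta^n=0$, together with $s_0$ having negative average. Existence and uniqueness of the smooth solution $u_\infty$ of \eqref{1.7+} I would take from the elliptic theory cited in the introduction, or obtain as a byproduct of the argument below.

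\textbf{Step 1: barriers.} Construct a smooth subsolution $\underline u$ and a supersolution $\overline u$ of the stationary equation with $\underline u\le 0\le\overline u$. For the supersolution, solve $\Delta^{Ch}_\eta \psi=s_0-\bar s_0$, where $\bar s_0=\int_M s_0\,\eta^n<0$; this is solvable because the cokernel of $\Delta^{Ch}_\eta$ is spanned by constants for Gauduchon $\eta$. Then take $\overline u=\psi+C$. The equation $F(u):=\Delta^{Ch}_\eta u-s_0+ge^{\frac2n u}$ gives $F(\overline u)=-\bar s_0+ge^{\frac2n(\psi+C)}$, and since $g\le0$ we need control where $g$ is close to $0$. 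I would instead use the standard Kazdan--Warner trick. First solve $\Delta^{Ch}_\eta\psi=a g-\overline{a g}$ with $a>0$ chosen so that the forcing dominates $s_0$ wherever $g$ is small, and then pick constants. The subsolution $\underline u=-C'$ is easy: $F(-C')=-s_0+ge^{-\frac2n C'}$, which is $\ge0$ only if $s_0<0$ everywhere. So I would first conformally modify, replacing $s_0$ by a function with a sign, via the solution of $\Delta^{Ch}_\eta\phi=s_0-\bar s_0$. Concretely, I would work throughout with the shifted unknown $v=u-\phi$.

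\textbf{Step 2: monotonicity.} Run the flow from $\underline u$ and from $\overline u$ (after the change of variables, using that the initial datum $0$ is trapped between them). By the parabolic comparison principle, the solution started at a subsolution is nondecreasing in $t$: the derivative $w=\partial_t u$ satisfies $\partial_t w=\Delta^{Ch}_\eta w+\frac2n g e^{\frac2n u}w$ with $w(0)\ge0$, so $w\ge0$. Likewise the solution started at a supersolution is nonincreasing. Comparison also gives $u^-(t)\le u(t)\le u^+(t)$, where $u(t)$ is our actual solution. Monotone bounded families converge pointwise. The $C^0$ bound, followed by parabolic $L^p$ and Schauder estimates (already available from \cite{[Yu1]}), upgrades this to smooth convergence of $u^\pm$ to stationary solutions. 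Uniqueness of the stationary solution (the zeroth-order term $ge^{\frac2n u}$ is nonincreasing in $u$, so the maximum principle applies) then forces both limits to equal $u_\infty$. Squeezing gives $u(t)\to u_\infty$ in $C^0$, and uniform $C^{1,\alpha}$ bounds together with interpolation give convergence in $C^{1,\alpha}$.

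\textbf{Main obstacle.} I expect the hardest part to be the barrier construction in Step 1 when $g$ vanishes on an open set. There the supersolution must absorb $-s_0$ using only $\bar s_0<0$, that is, $\Gamma<0$. My first attempt would be the Kazdan--Warner approach: solve $\Delta^{Ch}_\eta\psi=\bar s_0\,\frac{g}{\bar g}-s_0$ (adjusted to have zero average) and then show that $\overline u=\psi+C$ works for $C$ large, verifying that $F(\overline u)\le0$ by splitting $M$ into $\{g=0\}$ and its complement.
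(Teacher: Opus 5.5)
Your route is correct and genuinely different from the paper's. The paper takes the uniform bounds $\|u\|_{C^0},\|\partial_t u\|_{C^0}\le C_1$ from \cite{[Yu1]} as input and then argues by integrating against the Gauduchon volume:
\begin{itemize}
\item for $f=\partial_t u$, the inequality $(\partial_t-\Delta^{Ch}_\eta)f^2\le-2|\nabla f|^2$ gives $\int_0^\infty\|f-\overline f\|_{L^2}^2\,dt<\infty$;
\item a one-sided bound $V'\le C_6$ upgrades this to $\|f-\overline f\|_{L^2}\to0$;
\item the ODE $\overline f'=\frac2n\overline f(\overline f+\Gamma)+R(t)$, with $\overline f+\Gamma\le-\delta$ and $R\to0$, gives $\overline f\to0$;
\item full $C^{1,\alpha}$ convergence then follows from $C^{1,\beta}$ compactness and uniqueness of the stationary solution.
\end{itemize}
You use the Gauduchon condition only for the Fredholm alternative (zero-mean functions lie in the range of $\Delta^{Ch}_\eta$), and $\Gamma<0$ only as $\bar s_0<0$. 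Everything else is the parabolic comparison principle, which is valid because $ge^{\frac2n u}$ is nonincreasing in $u$.

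Your approach buys three things. The proof is self-contained: it does not need the $\partial_t u$ bound of \cite{[Yu1]}, and long-time existence of the auxiliary flows and existence of $u_\infty$ both come from $\underline u\le u^\pm(t)\le\overline u$. It also handles arbitrary smooth initial data. Finally, it gives smooth rather than only $C^{1,\alpha}$ convergence. The paper's method, in exchange, avoids building barriers.

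The step you flag as the main obstacle is easy, but your displayed choice has the wrong sign. With $\Delta^{Ch}_\eta\psi=\bar s_0\frac{g}{\bar g}-s_0$ you get $F(\psi+C)=-2s_0$ on $\{g=0\}$, which need not be nonpositive. Take instead $\Delta^{Ch}_\eta\psi=s_0-\bar s_0\frac{g}{\bar g}$, which has zero mean, where $\bar g=\int_M g\,d\mu_\eta<0$. Then $F(\psi+C)=g\bigl(e^{\frac2n(\psi+C)}-\frac{\bar s_0}{\bar g}\bigr)\le0$ for $C$ large, with no splitting of $M$ needed.

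Similarly, no change of unknown is needed for the subsolution. Taking $\underline u=\phi-C'$ with $\Delta^{Ch}_\eta\phi=s_0-\bar s_0$ gives $F(\underline u)=-\bar s_0+ge^{\frac2n(\phi-C')}>0$ for $C'$ large.

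For uniqueness of the stationary solution, the weak maximum principle is not enough. You need the strong maximum principle together with $g\not\equiv0$ to exclude a nonzero constant difference.
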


\begin{remark}
By making a small modification to the argument in Section 3, we may see that Theorem \ref{theorem1.1} still holds for the flow \eqref{2.1} with the initial data $u(x, 0)=u_0\in C^\infty(M)$.
\end{remark}

Let $PC(\omega)$ denote the set of $C^\infty(M)$ functions which are Chern scalar curvatures of all $\hat\omega \in \{\omega\}$. In other words, $PC(\omega)$ is the set of smooth functions $g$ for which one can find a solution of \eqref{1.3}. According to Theorem \ref{theorem1.1}, we have the following corollary, which recovers \cite[Theorem 2.5]{[Fus]}. 
\begin{corollary}\label{corollary1.3}
Let $(M^n,\omega)$ be a compact Hermitian manifold with complex dimension $n\geq 2$ and Gauduchon degree $\Gamma (\{\omega\})<0$. Suppose that $g\in C^\infty(M)$ is nonpositive and not identically zero. Then $g\in PC(\omega)$.
\end{corollary}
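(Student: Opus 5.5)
The plan is to obtain the corollary directly from Theorem \ref{theorem1.1} by a conformal change of metric. The one fact needed is how the Chern scalar curvature transforms under a conformal change. This is the computation behind the equivalence with \eqref{1.3} quoted from \cite{[ACS]}, and I would simply record it. If $\hat\omega=e^{\frac{2}{n}v}\eta$ with $v\in C^\infty(M)$, then $\hat\omega^n=e^{2v}\eta^n$, so
\[
Ric^{(1)}(\hat\omega)=Ric^{(1)}(\eta)+2\sqrt{-1}\bar\partial\partial v .
\]
Taking the trace with respect to $\hat\omega=e^{\frac{2}{n}v}\eta$ gives
\[
S^{Ch}(\hat\omega)=e^{-\frac{2}{n}v}\bigl(s_0-\Delta^{Ch}_\eta v\bigr),
\]
with the normalization of $\Delta^{Ch}_\eta$ that makes \eqref{1.3} correct. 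This is exactly the identity asserting that \eqref{1.3}, written with base metric $\eta$ in place of $\omega$, is equivalent to $S^{Ch}(\hat\omega)=g$.

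The key steps are as follows.

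\begin{enumerate}
\item Apply Theorem \ref{theorem1.1}. Its hypotheses are exactly those of the corollary: $\Gamma(\{\omega\})<0$, and $g\le 0$ with $g\not\equiv0$. It yields a smooth function $u_\infty$ solving \eqref{1.7+}, that is,
\[
-\Delta^{Ch}_\eta u_\infty+s_0=ge^{\frac{2}{n}u_\infty}.
\]
\item Set $\hat\omega=e^{\frac{2}{n}u_\infty}\eta$. By the transformation formula, $S^{Ch}(\hat\omega)=e^{-\frac{2}{n}u_\infty}\,g\,e^{\frac{2}{n}u_\infty}=g$.
\item Check that $\hat\omega$ lies in $\{\omega\}$. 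By Theorem \ref{theorem2.2}, $\eta$ is the Gauduchon representative of $\{\omega\}$, so $\eta=e^{\frac{2}{n}f}\omega$ for some $f\in C^\infty(M)$. Hence $\hat\omega=e^{\frac{2}{n}(u_\infty+f)}\omega$ is a smooth metric conformal to $\omega$.
\item Conclude that $g\in PC(\omega)$. Equivalently, $u=u_\infty+f$ solves \eqref{1.3}, which is \eqref{1.3} with $v=u$ and base metric $\omega$.
\end{enumerate}

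There is no genuine obstacle here, since all the analytic content is contained in Theorem \ref{theorem1.1}. The only point requiring care is consistency of conventions: the sign and normalization of $\Delta^{Ch}$ and the exponent $\frac{2}{n}$ must be the same as those used in \eqref{1.3}. I would therefore invoke the equivalence from \cite{[ACS]} as stated, applied with $\eta$ as the reference metric in place of $\omega$, rather than re-deriving the formula.
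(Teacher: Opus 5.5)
Your proposal is correct and matches the paper's argument: the paper presents the corollary as an immediate consequence of Theorem \ref{theorem1.1}. You spell out the two steps it leaves implicit, namely that $\hat\omega=e^{\frac{2}{n}u_\infty}\eta$ has Chern scalar curvature $g$ by \eqref{2.7} with base metric $\eta$ (your sign check against $\Delta^{Ch}_\omega u=-2\sqrt{-1}\,\text{tr}_\omega\bar\partial\partial u$ is consistent), and that $\hat\omega\in\{\omega\}$ because $\eta=e^{\frac{2}{n}f}\omega$.
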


\begin{remark}
Using a different geometric flow, Ho \cite{[Ho]} proved that if $(M^n,\omega)$ is a compact balanced Hermitian manifold with negative Chern scalar curvature, then $\{g\in C^\infty(M): g<0\}\subset PC(\omega)$.
\end{remark}

\textbf{Acknowledgements.} The author would like to thank Prof. Yuxin Dong and Prof. Xi Zhang for their continued support and encouragement. 

\section{Preliminaries}\label{sec2}
Let $(M^n, J, h)$ be a Hermitian manifold with $\dim_{\mathbb{C}} M=n$ and corresponding fundamental form $\omega(\cdot,\cdot)=h(J\cdot, \cdot)$. Let $TM^{\mathbb{C}}=TM\otimes \mathbb{C}$ be the complexified tangent space of $M$, and extend $J$ and $h$ from $TM$ to $TM^{\mathbb{C}}$ by $\mathbb{C}$-linearity. Then we have the following decomposition
\begin{align}
TM^{\mathbb{C}}=T^{1,0}M\oplus T^{0,1}M,
\end{align}
where $T^{1,0}M$ and $T^{0,1}M$ are the eigenspaces of complex structure $J$ corresponding to the eigenvalues $\sqrt{-1}$ and $-\sqrt{-1}$, respectively. Note that every $m$-form can also be decomposed into $(p, q)$-forms for each $p, q \geq 0$ with $p + q = m$ by extending the complex structure $J$ to forms.

On a Hermitian manifold $(M^n, J, h)$, there exists a unique affine connection $\nabla^{Ch}$ preserving both the Hermitian metric $h$ and the complex structure $J$, that is, $\nabla^{Ch}h=0$, $\nabla^{Ch}J=0$, whose torsion $T^{Ch}(X,Y)=\nabla^{Ch}_XY-\nabla^{Ch}_YX-[X,Y]$ has vanishing $(1,1)$-part everywhere. Note that throughout this paper, we will not distinguish between the Hermitian metric $h$ and its corresponding fundamental form $\omega$.

On a Hermitian manifold $(M^n, \omega)$ with the Chern connection $\nabla^{Ch}$, it is well known that the corresponding Chern scalar curvature is given by
\begin{align}
S^{Ch}(\omega)=\text{tr}_{\omega}Ric^{(1)}(\omega)=\text{tr}_{\omega}\sqrt{-1}\bar{\partial}\partial \log{\omega^n},
\end{align}
where $Ric^{(1)}(\omega)$ is the first Chern Ricci curvature.

Analogous to the Laplace operator, there is a canonical elliptic operator on Hermitian manifolds, called the Chern Laplace operator. More precisely, for any $u\in C^\infty(M)$, the Chern Laplacian $\Delta^{Ch}_\omega$ of $u$ is defined by
\begin{align}
\Delta^{Ch}_\omega u=-2\sqrt{-1} tr_{\omega}\overline{\partial}\partial u,
\end{align}
or, in local holomorphic coordinates $\{z^i\}$ such that $\omega\overset{\text{loc}}{=}\sqrt{-1}\sum_{i, j}h_{i\bar{j}}dz^i\wedge dz^{\bar{j}}$, as
\begin{align}
\Delta^{Ch}_\omega u\overset{\text{loc}}{=}2\sum_{i, j}h^{i\bar{j}}\frac{\partial^2u}{\partial z^i\partial z^{\bar{j}}},
\end{align}
where $(h^{i\bar{j}})$ is the inverse matrix of $(h_{i\bar{j}})$.

\begin{lemma}[cf. \cite{[Gau]}]\label{lemma2.1}
On a compact Hermitian manifold $(M^n,\omega)$, we have
\begin{align}
-\Delta^{Ch}_\omega u=-\Delta_d u+(du,\theta)_{\omega},
\end{align}
where $\Delta_d=-d^*d$ is the Hodge-de Rham Laplacian, $d^*$ denotes the adjoint of $d$ with respect to $\omega$, and $\theta$ is the Lee form or torsion $1$-form given by $d\omega^{n-1}=\theta\wedge \omega^{n-1}$, $(\cdot, \cdot)_\omega$ denotes the inner product on $1$-form induced by $\omega$. In particular, when $\omega$ is balanced (i.e., $\theta\equiv0$), the Chern Laplacian and the Hodge-de Rham Laplacian on smooth functions coincide.
\end{lemma}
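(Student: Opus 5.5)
The plan is to compare both sides through top-degree forms, using two standard pointwise identities on a Hermitian manifold. For a real $(1,1)$-form $\alpha$,
\begin{align*}
(\mathrm{tr}_\omega \alpha)\,\omega^n = n\,\alpha\wedge\omega^{n-1}.
\end{align*}
For a real $1$-form $\beta$, the Hodge star can be written as
\begin{align*}
*\beta = c_n\, J\beta\wedge\omega^{n-1},
\end{align*}
with a universal constant $c_n$ (of the form $\pm 1/(n-1)!$, depending on conventions). I will also use the operator $d^c=\sqrt{-1}(\bar\partial-\partial)$, so that $dd^c = 2\sqrt{-1}\partial\bar\partial$ and $d^c u = \pm J du$. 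I will fix the constants by checking them in flat $\mathbb{C}^n$ with the standard metric and a quadratic test function.

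\textbf{Step 1 (the Chern Laplacian).} Since $-2\sqrt{-1}\,\bar\partial\partial u = 2\sqrt{-1}\,\partial\bar\partial u = dd^c u$, the first identity gives
\begin{align*}
\Delta^{Ch}_\omega u\,\omega^n = n\, dd^c u\wedge\omega^{n-1}.
\end{align*}

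\textbf{Step 2 (the Hodge Laplacian).} Recall $-\Delta_d u = d^*du$, and on functions $d^* du\,\mathrm{vol} = -\,d{*}du$, where $\mathrm{vol}=\omega^n/n!$. Substituting $*du = c_n' \,d^c u\wedge\omega^{n-1}$ and applying the Leibniz rule for the $1$-form $d^cu$ gives
\begin{align*}
d(d^c u\wedge\omega^{n-1}) = dd^c u\wedge\omega^{n-1} - d^c u\wedge d\omega^{n-1} = dd^c u\wedge\omega^{n-1} - d^c u\wedge\theta\wedge\omega^{n-1}.
\end{align*}
Here I used the defining relation $d\omega^{n-1}=\theta\wedge\omega^{n-1}$ of the Lee form. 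Hence $\Delta_d u\,\omega^n$ equals $\Delta^{Ch}_\omega u\,\omega^n$ plus a term proportional to $d^c u\wedge\theta\wedge\omega^{n-1}$.

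\textbf{Step 3 (the torsion term).} Rewrite $d^cu\wedge\theta\wedge\omega^{n-1} = -\theta\wedge d^c u\wedge\omega^{n-1}$. This is, up to the same constant $c_n'$, equal to $\theta\wedge *du = (\theta,du)_\omega\,\mathrm{vol}$. Combining Steps 1--3 then yields
\begin{align*}
\Delta^{Ch}_\omega u = \Delta_d u - (du,\theta)_\omega,
\end{align*}
which is the claim. When $\theta\equiv 0$, the correction term vanishes, so the two Laplacians coincide on functions.

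The only genuine obstacle is bookkeeping of signs and normalizations: the sign of $d^c$ relative to $J$, the constant in $*\beta$, and the factor $2$ in the definition of $\Delta^{Ch}_\omega$. I would resolve these by checking the final formula in two cases. In the Kähler (flat) case both sides must reduce to $2\sum_i \partial^2 u/\partial z^i\partial \bar z^i$. For the sign of the $\theta$-term, I would use the well-known local expression $\Delta^{Ch}_\omega u - \Delta_d u = \langle du,\theta\rangle$ up to sign, obtained from the difference between the Chern and Levi-Civita connections. If needed, I would instead verify everything in a unitary frame at a point by expressing $d^*du$ via $\mathrm{div}(\nabla u)$ and computing the divergence of a vector field with respect to $\omega^n$ using $d\omega^{n-1}=\theta\wedge\omega^{n-1}$.
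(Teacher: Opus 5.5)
Your proposal is correct. The paper gives no argument for this lemma and only cites Gauduchon, so your computation is a self-contained version of the standard derivation behind that citation. The bookkeeping you defer does close up.

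With $d^c=\sqrt{-1}(\bar\partial-\partial)$, at a point where $\omega=\sum_i dx^i\wedge dy^i$ one has $d^cu=\sum_i(u_{x^i}dy^i-u_{y^i}dx^i)$ and $*du=\frac{1}{(n-1)!}\,d^cu\wedge\omega^{n-1}$, so $c_n'=\frac{1}{(n-1)!}$. Together with Step 1 and $\mathrm{vol}=\omega^n/n!$, this gives $d^*du\,\mathrm{vol}=-d{*}du=\bigl(-\Delta^{Ch}_\omega u-(du,\theta)_\omega\bigr)\mathrm{vol}$. Hence $-\Delta^{Ch}_\omega u=-\Delta_du+(du,\theta)_\omega$, exactly as stated.

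The sign of the torsion term needs no separate check. In Step 3,
$c_n'\,d^cu\wedge\theta\wedge\omega^{n-1}=-\theta\wedge(c_n'\,d^cu\wedge\omega^{n-1})=-\theta\wedge *du=-(\theta,du)_\omega\,\mathrm{vol}$,
whatever the value of $c_n'$. Only the $dd^cu$ term depends on the normalization, and your flat K\"ahler check pins that down. So the appeal to the ``well-known local expression up to sign'' is unnecessary, and being only up to sign, it could not settle a sign anyway.

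Your route is also purely pointwise, so compactness is not needed for the identity itself. Compactness only matters later, when the paper integrates $\Delta^{Ch}_\eta f^2$ against $d\mu_\eta$ and uses $d^*\theta=0$.
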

Let 
\begin{align}
\{\omega\}=\{e^{\frac{2}{n}u}\omega\ |\ u\in C^\infty(M)\}
\end{align}
denote the conformal class of the Hermitian metric $\omega$. In \cite{[Gau1]}, Gauduchon proved the following interesting result.
\begin{theorem}\label{theorem2.2}
Let $(M, \omega)$ be a compact Hermitian manifold with $\dim_{\mathbb{C}} M\geq 2$, then there exists a unique Gauduchon metric $\eta\in \{\omega\}$ (i.e., $d^*\theta=0$) with volume $1$.
\end{theorem}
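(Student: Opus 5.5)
The plan is to recast the Gauduchon condition as a linear elliptic equation for a positive conformal factor, and then use Fredholm theory together with the maximum principle.

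\textbf{Reformulation.} First I would check that for $n\geq 2$ the condition $d^*\theta=0$ is equivalent to $\partial\bar\partial\,\eta^{n-1}=0$. This is a standard computation: it uses $d\eta^{n-1}=\theta\wedge\eta^{n-1}$ and the fact that $d^*\theta$ is, up to a nonzero constant, $*\,dd^c\eta^{n-1}$. Every metric in $\{\omega\}$ has the form $\eta=\varphi^{\frac{1}{n-1}}\omega$ with $\varphi>0$ smooth, and then $\eta^{n-1}=\varphi\,\omega^{n-1}$. So the task is to find a positive $\varphi$ with
\begin{align*}
P^*\varphi:=*\,\sqrt{-1}\partial\bar\partial(\varphi\,\omega^{n-1})=0,
\end{align*}
unique up to a positive multiple. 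Integrating by parts shows that $P^*$ is the formal $L^2(\omega^n)$-adjoint of the operator
\begin{align*}
Pu=c_n\,\mathrm{tr}_\omega\sqrt{-1}\partial\bar\partial u,
\end{align*}
where $c_n$ is a positive constant. Thus $P$ is a positive multiple of $\Delta^{Ch}_\omega$: it is elliptic and has no zeroth-order term.

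\textbf{Fredholm step.} By the strong maximum principle, $\ker P$ consists of the constants only. The operator $P$ has the principal symbol of the Laplacian, so it is homotopic through elliptic operators to a self-adjoint one and has index $0$. Hence $\ker P^*$ is one-dimensional; let $\varphi$ span it. By the Fredholm alternative,
\begin{align*}
\mathrm{Im}\,P=\Big\{f:\int_M f\varphi\,\omega^n=0\Big\}.
\end{align*}

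\textbf{Positivity (main obstacle).} Suppose $\varphi$ changes sign. Then I can choose a smooth $f\geq 0$, $f\not\equiv 0$, supported on both $\{\varphi>0\}$ and $\{\varphi<0\}$ with weights chosen so that $\int_M f\varphi\,\omega^n=0$. The equation $Pu=f\geq0$ then has a solution $u$, which is subharmonic for $P$. By the strong maximum principle $u$ is constant, so $f=0$, a contradiction. After replacing $\varphi$ by $-\varphi$ if necessary, we get $\varphi\geq0$ and $\varphi\not\equiv0$. The equation $P^*\varphi=0$, written out, is a second-order elliptic equation with smooth lower-order coefficients (including a zeroth-order term). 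The strong maximum principle for nonnegative solutions, in Hopf's form, then forces $\varphi>0$ everywhere. Smoothness of $\varphi$ follows from elliptic regularity.

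\textbf{Normalization and uniqueness.} The condition $\partial\bar\partial\eta^{n-1}=0$ is invariant under multiplying $\eta$ by a positive constant. So I can rescale $\varphi$ so that $\eta=\varphi^{\frac1{n-1}}\omega$ has volume $1$. For uniqueness, let $\eta'$ be another Gauduchon metric in $\{\omega\}$. Its factor $\varphi'$ lies in the one-dimensional space $\ker P^*$, so $\varphi'=c\varphi$ with $c>0$, and the volume normalization forces $c=1$.
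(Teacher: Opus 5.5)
The paper gives no proof of this statement; it quotes it from Gauduchon. Your argument is correct and is essentially Gauduchon's original proof: Gauduchon metrics in $\{\omega\}$ correspond to positive elements of $\ker P^*$, where $P$ is a positive multiple of $\Delta^{Ch}_\omega$. Then $\ker P=\mathbb{R}$ and index zero give $\dim\ker P^*=1$, and the Fredholm alternative plus the maximum principle force the generator first to have a sign and then to be strictly positive.

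Two points are left implicit. First, you need $M$ connected, both for $\ker P=\mathbb{R}$ and for uniqueness, since on a disconnected $M$ the volume normalization fixes only one overall constant. Second, in the Hopf step the zeroth-order coefficient of $P^*$, a multiple of $*\sqrt{-1}\partial\bar\partial\omega^{n-1}$, has no sign. One must move its positive part to the right-hand side before applying the strong maximum principle to the nonnegative supersolution $\varphi$; this is what the version you cite does.
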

In terms of the above theorem, one defines an invariant $\Gamma(\{\omega\})$ of the conformal class $\{\omega\}$ as follows:
\begin{align}\label{2.6.}
\Gamma(\{\omega\})=\frac{1}{(n-1)!}\int_M c^{BC}_1(K^{-1}_M)\wedge\eta^{n-1}=\int_M S^{Ch}(\eta)d\mu_\eta,
\end{align}
where $c^{BC}_1(K^{-1}_M)$ is the first Bott-Chern class of anti-canonical line bundle $K^{-1}_M$, and $d\mu_\eta$ denotes the volume form of the Gauduchon metric $\eta$.

On a compact Hermitian manifold $(M^n, \omega)$, we consider the conformal change $\widetilde{\omega}=e^{\frac{2}{n}u}\omega$. According to \cite{[Gau]}, the Chern scalar curvatures of $\widetilde{\omega}$ and $\omega$ have the following relationship:
\begin{align}\label{2.7}
-\Delta^{Ch}_\omega u+S^{Ch}(\omega)=S^{Ch}(\widetilde{\omega})e^{\frac{2}{n}u}.
\end{align}
In order to attack the prescribed Chern scalar curvature problem, it is useful to view equation \eqref{2.7} as an Euler-Lagrange equation of some functional. Unfortunately, in general, such a functional does not always exist. More precisely, from \cite[Prop. 5.3]{[ACS]} and \cite[Prop. 2.12]{[Fus]}, we know the following
\begin{proposition}
Equation \eqref{2.7} can be viewed as the Euler-Lagrange equation for the standard $L^2$ pairing if and only if $\omega$ is balanced.
\end{proposition}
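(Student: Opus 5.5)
The plan is to make precise what ``Euler-Lagrange equation for the standard $L^2$ pairing'' means, and then reduce the statement to the symmetry of the Chern Laplacian. Fix $g\in C^\infty(M)$ and set
\begin{align*}
F(u)=-\Delta^{Ch}_\omega u+S^{Ch}(\omega)-ge^{\frac{2}{n}u}.
\end{align*}
The question is whether there is a functional $E$ on $C^\infty(M)$ (or $W^{1,2}(M)$) with $\frac{d}{ds}\big|_{s=0}E(u+sv)=\int_M F(u)\,v\,d\mu_\omega$ for all $u,v$. I will use the standard ``Poincaré lemma on a vector space'': such an $E$ exists if and only if the linearization $DF_u$ is symmetric with respect to $\int_M(\cdot)(\cdot)\,d\mu_\omega$ for every $u$.
\begin{itemize}
\item Necessity: the second variation of $E$ is symmetric.
\item Sufficiency: one takes $E(u)=\int_0^1\int_M F(tu)\,u\,d\mu_\omega\,dt$ and differentiates under the integral, using the symmetry of $DF$.
\end{itemize}
The linearization is
\begin{align*}
DF_u v=-\Delta^{Ch}_\omega v-\tfrac{2}{n}ge^{\frac{2}{n}u}v .
\end{align*}
The zeroth-order term is a multiplication operator and is automatically symmetric. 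So the whole question is whether $-\Delta^{Ch}_\omega$ is $L^2(d\mu_\omega)$-symmetric.

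For this I use Lemma \ref{lemma2.1}: $-\Delta^{Ch}_\omega v=d^*dv+(dv,\theta)_\omega$. The term $d^*d$ is symmetric, so only the drift term matters. For $v,w\in C^\infty(M)$, integration by parts gives
\begin{align*}
\int_M w(dv,\theta)_\omega\,d\mu_\omega=\int_M (d(vw),\theta)_\omega\,d\mu_\omega-\int_M v(dw,\theta)_\omega\,d\mu_\omega=\int_M vw\,d^*\theta\,d\mu_\omega-\int_M v(dw,\theta)_\omega\,d\mu_\omega .
\end{align*}
Hence the antisymmetric defect is
\begin{align*}
\int_M w\,(-\Delta^{Ch}_\omega v)-\int_M v\,(-\Delta^{Ch}_\omega w)=\int_M vw\,d^*\theta\,d\mu_\omega-2\int_M v(dw,\theta)_\omega\,d\mu_\omega .
\end{align*}

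If $\omega$ is balanced, then $\theta=0$ and the defect vanishes. In that case the functional can be written explicitly as
\begin{align*}
E(u)=\int_M\Big(\tfrac12|du|^2_\omega+S^{Ch}(\omega)u-\tfrac n2 ge^{\frac{2}{n}u}\Big)d\mu_\omega .
\end{align*}
Conversely, suppose the defect vanishes for all $v,w$.
\begin{enumerate}[(a)]
\item Taking $w\equiv1$ gives $\int_M v\,d^*\theta\,d\mu_\omega=0$ for all $v$, so $d^*\theta=0$.
\item The defect then reduces to $\int_M v(dw,\theta)_\omega\,d\mu_\omega=0$ for all $v,w$. This forces $(dw,\theta)_\omega\equiv0$ for every $w$.
\item Since the differentials of smooth functions span $T^*_xM$ at each point, $\theta\equiv0$, i.e. $\omega$ is balanced.
\end{enumerate}

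The main obstacle is the first step, namely fixing the right notion of variational structure. The computation itself is routine. One must argue that no functional, for any admissible $g$ (in particular $g\equiv0$, where $DF_u=-\Delta^{Ch}_\omega$), can have $F$ as its $L^2$-gradient unless $DF$ is symmetric. This is exactly the symmetry of second derivatives of $E$, and it needs only enough regularity (for instance $E\in C^2$ on $W^{1,2}$) for equality of mixed partials. I will state this requirement explicitly.
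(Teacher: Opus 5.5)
The paper does not prove this proposition; it only cites \cite[Prop. 5.3]{[ACS]} and \cite[Prop. 2.12]{[Fus]}. Your self-contained argument is correct and is the usual symmetry-of-the-linearization argument for such statements. In the balanced case you exhibit an explicit functional. In the converse direction, symmetry of the second variation makes $-\Delta^{Ch}_\omega$ symmetric in $L^2(d\mu_\omega)$. Lemma \ref{lemma2.1} with $w\equiv 1$ then gives $d^*\theta=0$, hence $(dw,\theta)_\omega\equiv 0$ for all $w$, and so $\theta\equiv 0$.

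The extra regularity hypothesis on $E$ that you plan to impose is not needed. The assumed identity already gives the partial derivatives of $\phi(s,t)=E(u+sv+tw)$ as $\int_M F(u+sv+tw)\,v\,d\mu_\omega$ and $\int_M F(u+sv+tw)\,w\,d\mu_\omega$, which are smooth in $(s,t)$. Schwarz's theorem therefore gives the symmetry of $DF_u$ directly.
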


\section{Proof of Main theorem}

\begin{lemma}\label{lemma3.1}
The flow \eqref{2.1} has a unique smooth solution $u$ defined on $M\times [0,+\infty)$. Furthermore, there exists a constant $C_1>0$ independent of $t\in [0,+\infty)$ such that 
 \begin{align}\label{3.1-}
 \|u\|_{C^0(M)}<C_1, \quad \left\|\frac{\partial u}{\partial t}\right\|_{C^0(M)}<C_1,
 \end{align}
 and thus
 \begin{align}\label{3.2-}
 \|u\|_{C^{1, \alpha}(M)}<C_2,
 \end{align}
 for any $\alpha\in (0,1)$, and $C_2$ is a constant independent of $t\in [0,+\infty)$. Here and throughout,  $C_i\ (i=1, 2, 3, \dots)$ denotes a generic constant independent of $t\in [0,+\infty)$.
 \end{lemma}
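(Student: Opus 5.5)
Our plan is to combine the maximum principle with a sub/supersolution comparison. The structural fact we use is that, since $\eta$ is Gauduchon, $\int_M \Delta^{Ch}_\eta f\, d\mu_\eta=0$ for every $f$. This holds because by Lemma \ref{lemma2.1} the operator $\Delta^{Ch}_\eta$ differs from $\Delta_d$ only by the drift term $-(df,\theta)$, and $\int_M (df,\theta)\,d\mu_\eta=\int_M f\, d^*\theta\, d\mu_\eta=0$.

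\textbf{Existence and the bound on $u$.} Short-time existence is standard, since the equation is semilinear and uniformly parabolic. Long-time existence will follow once we have a priori $C^0$ bounds on any finite interval, after which parabolic Schauder theory gives all higher bounds.

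For the bound on $u$, we compare with stationary barriers. Since $\Gamma(\{\omega\})=\int_M s_0\,d\mu_\eta<0$, the mean of $s_0$ is negative, so we can solve $\Delta^{Ch}_\eta\psi = s_0-\bar s_0$ with $\bar s_0=\int_M s_0\, d\mu_\eta<0$. The right-hand side has zero mean, and it lies in the range of $\Delta^{Ch}_\eta$ because the cokernel of $\Delta^{Ch}_\eta$ consists of the constants (its adjoint annihilates constants exactly because $\eta$ is Gauduchon).

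Next we construct a subsolution of the form $\underline u=\psi - A$ with $A$ large, so that the right-hand side $\Delta\underline u - s_0+ge^{2\underline u/n} = -\bar s_0+ge^{2(\psi-A)/n}$ is positive. It then follows that $\underline u$ is a stationary subsolution lying below $0$.

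For the supersolution we need more care, since $g$ may vanish on an open set. Following the Kazdan–Warner approach, we seek $\overline u$ solving $-\Delta\overline u+s_0\ge g e^{2\overline u/n}$. We do this by choosing $\phi$ with $\Delta^{Ch}_\eta\phi = \bar g\cdot c - g c$-type corrections; concretely, we solve $\Delta^{Ch}_\eta \phi = -\bar s_0 \frac{g}{\bar g}+ s_0 - \text{(adjusted mean)}$, so that $e^{2(\phi+B)/n}$ with $B$ large dominates where $g<0$. This is exactly the elliptic supersolution construction used for \eqref{1.6.}.

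The comparison principle then gives $\underline u\le u\le \overline u$ for as long as the solution exists, which is a uniform $C^0$ bound. Note that the initial value $u=0$ lies between the barriers after shifting constants.

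\textbf{Bound on $u_t$.} We differentiate the flow: $v=u_t$ satisfies $v_t=\Delta^{Ch}_\eta v+\frac{2}{n}ge^{2u/n}v$. Since $g\le 0$, the zeroth-order coefficient is nonpositive, so the maximum principle gives $\|v(t)\|_{C^0}\le \|v(0)\|_{C^0}=\|{-s_0}+g\|_{C^0}$. This proves \eqref{3.1-}.

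\textbf{The $C^{1,\alpha}$ bound.} For each fixed $t$ we view $\Delta^{Ch}_\eta u = u_t+s_0-ge^{2u/n}$ as an elliptic equation with right-hand side bounded in $L^\infty$ uniformly in $t$. $L^p$ estimates give $\|u\|_{W^{2,p}}\le C$ for all $p<\infty$, and Sobolev embedding then gives \eqref{3.2-} for every $\alpha\in(0,1)$.

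\textbf{Main obstacle.} The hard step is the upper barrier when $g$ vanishes somewhere. This is precisely where the earlier work needed either $g<0$ or a balanced metric. We would try first to take $\overline u$ to be the elliptic solution $u_\infty$ of \eqref{1.6.} plus a constant, since Corollary-type existence of $u_\infty$ (as in \cite{[Fus]}) already provides a genuine stationary solution. Comparison with $u_\infty+c$, which is a supersolution for $c\ge0$ because $g\le0$, and with $u_\infty-c$, a subsolution, then gives the two-sided bound directly.
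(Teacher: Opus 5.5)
Your argument is correct, but it is more self-contained than the paper's. The paper cites \cite{[Yu1]} for long-time existence and for both bounds in \eqref{3.1-}. It proves only \eqref{3.2-}, using the same $L^p$-estimate and Sobolev-embedding step you use.

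Your barriers work. The lower barrier $\psi-A$ is fine as written. For the upper one, note that $\int_M (s_0-\bar s_0 g/\bar g)\,d\mu_\eta=0$, so no mean adjustment is needed. With $\Delta^{Ch}_\eta\phi=s_0-\bar s_0 g/\bar g$ one gets
\[
\Delta^{Ch}_\eta(\phi+B)-s_0+ge^{\frac{2}{n}(\phi+B)}=g\Bigl(e^{\frac{2}{n}(\phi+B)}-\frac{\bar s_0}{\bar g}\Bigr)\le 0
\]
for $B$ large, since $g\le 0$.

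Comparison is immediate because $u\mapsto -s_0+ge^{2u/n}$ is nonincreasing. The same comparison applied to two solutions gives uniqueness, which is part of the statement and which you left unstated. Your maximum-principle bound on $u_t$ is the pointwise form of the computation \eqref{3.1} that the paper later integrates, and it yields the explicit constant $\|g-s_0\|_{C^0(M)}$.

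Your route shows that $\Gamma(\{\omega\})<0$ is used only for the lower barrier and $\int_M g\,d\mu_\eta<0$ only for the upper one. The paper's route gains brevity.

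You should, however, delete the ``main obstacle'' paragraph. The paper attributes \eqref{3.1-} to \cite{[Yu1]} under exactly the present hypotheses; the extra assumptions there concern the convergence statement, not the barrier. More importantly, using the elliptic solution from \cite{[Fus]} as a barrier would make Corollary \ref{corollary1.3} circular, since that corollary is meant to recover \cite[Theorem 2.5]{[Fus]} via the flow. Your explicit $\phi+B$ already makes that fallback unnecessary.
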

 \proof The existence of long-time solution of \eqref{2.1} and the estimates \eqref{3.1-} have been proved in \cite{[Yu1]}. Next we prove \eqref{3.2-}. From \eqref{2.1} and \eqref{3.1-}, we obtain
 \begin{equation}\label{3.3-}
  \begin{aligned}
\left\| \Delta^{Ch}_\eta u\right\|_{C^0(M)}&=\left\|\frac{\partial u}{\partial t}+s_0-ge^{\frac{2}{n}u}\right\|_{C^0(M)}\\
&\leq \left\|\frac{\partial u}{\partial t}\right\|_{C^0(M)}+\left\|s_0\right\|_{C^0(M)}+\sup_M|g|\cdot e^{\frac{2}{n}\|u\|_{C^0(M)}}\\
&\leq C_1+\left\|s_0\right\|_{C^0(M)}+\sup_M|g|\cdot e^{\frac{2}{n}C_1}=:C_3.
\end{aligned}
\end{equation}
Using the $L^p$-estimate for elliptic equations and \eqref{3.3-} yields for any $p>1$
\begin{align}
\|u\|_{W^{2,p}(M)}\leq C_4\left(\| \Delta^{Ch}_\eta u\|_{L^p(M)}+\|u\|_{L^p(M)} \right)\leq C_4(C_3+C_1),
\end{align}
that is, $\|u\|_{W^{2,p}(M)}$ is bounded uniformly with respect to $t\in [0, +\infty)$ for any $p>1$. Here $W^{2, p}(M)$ denotes the Sobolev space on $M$ with respect to $\eta$. Therefore, for any $\alpha\in (0,1)$, taking $p>\frac{2n}{1-\alpha}$ and using the Sobolev embedding theorem $W^{2,p}(M)\subset C^{1,\alpha}(M)$, we obtain
\begin{align}
\|u\|_{C^{1, \alpha}(M)}\leq C_5\|u\|_{W^{2,p}(M)}\leq C_5C_4(C_3+C_1)=:C_2,
\end{align}
which finishing the proof of \eqref{3.2-}.
\qed

\proof[Proof of Theorem \ref{theorem1.1}]
Set 
\begin{align}\label{3.6..}
f=\partial_t u,
\end{align}
where $\partial_t=\frac{\partial}{\partial t}$. From \eqref{2.1} and  $g\leq 0$, we deduce that
\begin{equation}\label{3.1}
\begin{aligned}
\left(\partial_t-\Delta^{Ch}_\eta\right)f^2=\frac{4}{n}ge^{\frac{2}{n}u}f^2-2|\nabla f|^2\leq -2|\nabla f|^2.
\end{aligned}
\end{equation}
Since $\eta$ is Gauduchon (i.e., $d^*\theta=0$), it follows from Lemma \ref{lemma2.1} that
\begin{align}\label{3.2}
\int_M \Delta^{Ch}_\eta f^2d\mu_\eta=\int_M\Delta_d f^2d\mu_\eta-\int_Mf^2d^*\theta d\mu_\eta=0.
\end{align}
Integrating both sides of \eqref{3.1} over $M$ with respect to $\eta$ and using \eqref{3.2} yields
\begin{equation}\label{3.3}
\begin{aligned}
\frac{d}{dt}\int_M f^2d\mu_\eta=\int_M \partial_tf^2d\mu_\eta\leq -2\int_M|\nabla f|^2d\mu_\eta.
\end{aligned}
\end{equation}
Applying the Poincar\'e inequality on the compact manifold $(M, \eta)$: there exists a constant $C_P>0$ independent of $t$ such that
\begin{align}\label{3.4}
\int_M|f-\overline{f}|^2d\mu_\eta\leq C_P\int_M|\nabla f|^2d\mu_\eta,
\end{align}
where $\overline{f}=\int_M fd\mu_\eta$.
Combining \eqref{3.3} and \eqref{3.4}, we get
\begin{align}
\frac{d}{dt}\int_M f^2d\mu_\eta\leq -\frac{2}{C_P}\int_M|f-\overline{f}|^2d\mu_\eta.
\end{align}
For any $T>0$, integrating the above inequality over $[0, T]$ gives
\begin{equation}
\begin{aligned}
\int^T_0\int_M|f-\overline{f}|^2d\mu_\eta dt&\leq \frac{C_P}{2}\left(\|f\|_{L^2(M)}^2(0)-\|f\|_{L^2(M)}^2(T) \right)\\
&\leq \frac{C_P}{2}\|f\|_{L^2(M)}^2(0)<+\infty,
\end{aligned}
\end{equation}
which implies
\begin{align}\label{3.7.}
\int^{+\infty}_0\int_M|f-\overline{f}|^2d\mu_\eta dt<+\infty.
\end{align}

Set 
\begin{equation}
\begin{aligned}
V(t):&=\int_M|f-\overline{f}|^2d\mu_\eta\\
&=\int_M\left(f^2-2f\overline{f}+\overline{f}^2\right)d\mu_\eta\\
&=\int_M f^2d\mu_\eta-\overline{f}^2.
\end{aligned}
\end{equation}
Here we have used $\int_M d\mu_\eta=1$. Thus,
\begin{equation}\label{3.15+}
\begin{aligned}
V'(t)=\frac{d}{dt}\int_M f^2d\mu_\eta-2\overline{f}\int_M\partial_t fd\mu_\eta.
\end{aligned}
\end{equation}
From \eqref{2.1}, it follows that
\begin{align}\label{3.16+}
\partial_tf=\Delta^{Ch}_\eta f+\frac{2}{n}ge^{\frac{2}{n}u}f.
\end{align}
Substituting \eqref{3.16+} into \eqref{3.15+} and using \eqref{3.3} and \eqref{3.1-} yields
\begin{equation}\label{3.17+}
\begin{aligned}
V'(t)&\leq 0-2\overline{f}\int_M\left(\Delta^{Ch}_\eta f+\frac{2}{n}ge^{\frac{2}{n}u}f\right)d\mu_\eta\\
&=-2\overline{f}\int_M\frac{2}{n}ge^{\frac{2}{n}u}fd\mu_\eta\\
&\leq \frac{4}{n}|\overline{f}|\int_M|g|e^{\frac{2}{n}u}|f|d\mu_\eta\\
&\leq \frac{4}{n}C_1^2\sup_M|g|e^{\frac{2}{n}C_1}=:C_6,
\end{aligned}
\end{equation}
where $C_6>0$ is a constant independent of $t$. Now we claim that 
\begin{align}\label{3.9}
\lim_{t\rightarrow +\infty}V(t)=0.
\end{align}
If not, then since $V(t)\geq 0$, there exist a constant $\epsilon_0>0$ and a sequence $t_i\rightarrow +\infty$ such that
\begin{align}
V(t_i)\geq \epsilon_0,\quad \forall\ i.
\end{align}
By \eqref{3.17+}, we obtain that for any $s\in [t_i-\frac{\epsilon_0}{2C_6}, t_i]$,
\begin{align}
V(t_i)-V(s)=\int^{t_i}_sV'(t)dt\leq C_6(t_i-s),
\end{align}
and then
\begin{align}
V(s)\geq V(t_i)-C_6(t_i-s)\geq \epsilon_0-C_6\frac{\epsilon_0}{2C_6}=\frac{\epsilon_0}{2}.
\end{align}
Since $t_i\rightarrow +\infty$, we may choose a subsequence, still denoted by $t_i$, such that the intervals
\begin{align}
I_i=[t_i-\frac{\epsilon_0}{2C_6}, t_i]
\end{align}
are pairwise disjoint. Hence,
\begin{align}
\int^\infty_0V(t)dt\geq\sum_{i=1}^\infty\int_{I_i}V(t)dt\geq \sum_{i=1}^\infty\frac{\epsilon_0}{2}\frac{\epsilon_0}{2C_6}=+\infty,
\end{align}
which contradicts \eqref{3.7.}. Therefore, we finish the proof of \eqref{3.9}:
\begin{align}
\lim_{t\rightarrow +\infty}\int_M|f-\overline{f}|^2d\mu_\eta=0.
\end{align}

Next we will prove
\begin{align}\label{3.12..}
\lim_{t\rightarrow +\infty}\|f(\cdot, t)\|_{L^2(M)}=0.
\end{align}
To this end, it suffices to prove
\begin{align}\label{3.11}
\lim_{t\rightarrow +\infty}|\overline{f}|(t)=0,
\end{align}
because of \eqref{3.9} and  
\begin{equation}
\begin{aligned}
\|f(\cdot, t)\|_{L^2(M)}&\leq \|(f-\overline{f})(\cdot, t)\|_{L^2(M)}+\|\overline{f}(t)\|_{L^2(M)}\\
&= \|(f-\overline{f})(\cdot, t)\|_{L^2(M)}+|\overline{f}|(t).
\end{aligned}
\end{equation}
Now we start to prove \eqref{3.11}. By \eqref{2.1}, we get
\begin{equation}\label{3.26...}
\begin{aligned}
\overline{f}(t)&=\int_M\partial_t ud\mu_\eta\\
&=\int_M\Delta^{Ch}_\eta ud\mu_\eta-\int_Ms_0d\mu_\eta+\int_Mge^{\frac{2}{n}u}d\mu_\eta\\
&=0-\int_Ms_0d\mu_\eta+\int_Mge^{\frac{2}{n}u}d\mu_\eta\\
&=-\Gamma(\{\omega\})+\int_Mge^{\frac{2}{n}u}d\mu_\eta.
\end{aligned}
\end{equation}
From \eqref{3.26...} and $g\leq (\not\equiv) 0$, it follows that
\begin{align}\label{3.13}
\overline{f}(t)\leq -\Gamma(\{\omega\})-\delta,
\end{align}
where $\delta=-(\int_Mgd\mu_\eta) e^{-\frac{2}{n}C_1}>0$. Moreover, according to \eqref{3.26...} and \eqref{3.6..}, we have
\begin{equation}\label{3.14}
\begin{aligned}
\overline{f}'(t)&=0+\frac{d}{dt}\int_Mge^{\frac{2}{n}u}d\mu_\eta\\
&=\frac{2}{n}\int_Mge^{\frac{2}{n}u}\partial_t ud\mu_\eta\\
&=\frac{2}{n}\int_Mge^{\frac{2}{n}u}(f-\overline{f})d\mu_\eta+\frac{2}{n}\int_Mge^{\frac{2}{n}u}d\mu_\eta\cdot \overline{f}\\
&=\frac{2}{n}\int_Mge^{\frac{2}{n}u}(f-\overline{f})d\mu_\eta+\frac{2}{n}\overline{f}(\overline{f}+\Gamma(\{\omega\})).
\end{aligned}
\end{equation}
Let
\begin{align}
R(t)=\frac{2}{n}\int_Mge^{\frac{2}{n}u}(f-\overline{f})d\mu_\eta.
\end{align}
By \eqref{3.9} and \eqref{3.1-}, we obtain
\begin{align}\label{3.19}
|R(t)|\leq \frac{2}{n}\left(\int_Mg^2e^{\frac{4}{n}C_1}d\mu_\eta\right)^{\frac{1}{2}} \|f-\overline{f}\|_{L^2(M)}(t)\rightarrow 0,
\end{align}
as $t\rightarrow +\infty$. Define
\begin{align}
W_\epsilon(t)=\sqrt{|\overline{f}(t)|^2+\epsilon}, \quad \forall \epsilon>0.
\end{align}
Using \eqref{3.14} and \eqref{3.13}, we have
\begin{equation}\label{3.21}
\begin{aligned}
\frac{d}{dt}W_\epsilon(t)&=\frac{\overline{f}(t)\overline{f}'(t)}{\sqrt{(\overline{f}(t))^2+\epsilon}}\\
&=\frac{R(t)\overline{f}+\frac{2}{n}\overline{f}^2(\overline{f}+\Gamma(\{\omega\}))}{\sqrt{(\overline{f}(t))^2+\epsilon}}\\
&\leq \frac{|R(t)|\sqrt{|\overline{f}|^2+\epsilon}-\frac{2}{n}\delta(\overline{f}^2+\epsilon)+\frac{2}{n}\delta\epsilon}{\sqrt{(\overline{f}(t))^2+\epsilon}}\\
&\leq -\frac{2}{n}\delta W_\epsilon(t)+|R(t)|+\frac{2}{n}\delta\epsilon^{\frac{1}{2}}.
\end{aligned}
\end{equation}
From \eqref{3.19}, it follows that for any $\epsilon'>0$ there exists a constant $T_{\epsilon'}>0$ such that for any $t\geq T_{\epsilon'}$ we have
\begin{align}\label{3.22}
|R(t)|<\frac{2}{n}\delta\epsilon'.
\end{align}
Then from \eqref{3.21} and \eqref{3.22}, we obtain 
\begin{align}
\frac{d}{dt}\left(W_\epsilon(t)-\epsilon'-\epsilon^{\frac{1}{2}} \right)\leq -\frac{2}{n}\delta \left(W_\epsilon(t)-\epsilon'-\epsilon^{\frac{1}{2}} \right),\quad \forall\ t\geq T_{\epsilon'}.
\end{align}
which implies
\begin{align}
\frac{d}{dt}\left(e^{\frac{2}{n}\delta t}\left(W_\epsilon(t)-\epsilon'-\epsilon^{\frac{1}{2}} \right) \right)\leq 0, \quad \forall\ t\geq T_{\epsilon'}.
\end{align}
Thus,
\begin{align}\label{3.36--}
W_\epsilon(t)-\epsilon'-\epsilon^{\frac{1}{2}}\leq e^{-\frac{2}{n}\delta (t-T_{\epsilon'})}\left(W_\epsilon(T_{\epsilon'})-\epsilon'-\epsilon^{\frac{1}{2}} \right),  \quad \forall\ t\geq T_{\epsilon'}.
\end{align}
Letting $t\rightarrow +\infty$ in \eqref{3.36--}, we obtain
\begin{align}
\varlimsup_{t\rightarrow +\infty}|\overline{f}|(t)\leq \varlimsup_{t\rightarrow +\infty} \sqrt{(\overline{f}(t))^2+\epsilon}=\varlimsup_{t\rightarrow +\infty}W_\epsilon(t)\leq \epsilon'+\epsilon^{\frac{1}{2}}.
\end{align}
Letting $\epsilon, \epsilon'\rightarrow 0+$ in the above inequality, we have
\begin{align}
\lim_{t\rightarrow +\infty}|\overline{f}|(t)=0,
\end{align}
which finishing the proof of \eqref{3.11}. Therefore, we have proved \eqref{3.12..}, that is,
\begin{align}\label{3.39}
\lim_{t\rightarrow +\infty}\left\|\partial_t u\right\|_{L^2(M)}=0.
\end{align}

Finally, we will prove that there exists a function $u_\infty\in C^{1, \alpha}(M)$ such that
\begin{align}\label{3.42}
\lim_{t\rightarrow +\infty}\|u(\cdot, t)-u_\infty(\cdot)\|_{C^{1, \alpha}(M)}=0
\end{align}
for any $\alpha\in (0,1)$. Indeed, for any $\alpha\in (0, 1)$, we first choose $\beta\in (\alpha, 1)$. By \eqref{3.2-}, we have
\begin{align}\label{3.43++}
\|u\|_{C^{1, \beta}(M)}<C_2.
\end{align}
By the compact embedding theorem $C^{1, \beta}(M)\subset\subset C^{1,\alpha}(M)$, there exist a function $u_\infty\in C^{1, \alpha}(M)$ and a sequence $t_i\rightarrow +\infty$ such that
\begin{align}\label{3.47}
u(\cdot,t_i)\rightarrow u_\infty \text{\ in\ } C^{1,\alpha}(M).
\end{align}
Clearly, \eqref{3.39} and \eqref{3.47} imply $u_\infty$ is a weak solution of \eqref{1.7+}. By the regularity results for elliptic equations, we conclude that $u_\infty$ is a smooth solution of \eqref{1.7+}. Next we prove
\begin{align}\label{3.42+}
\lim_{t\rightarrow +\infty}\|u(\cdot, t)-u_\infty(\cdot)\|_{C^{1, \alpha}(M)}=0.
\end{align}
If \eqref{3.42+} does not hold, then there exist a constant $\epsilon_0>0$ and a sequence $t_k\rightarrow +\infty$ such that
\begin{align}\label{3.46--}
\|u(\cdot, t_k)-u_\infty(\cdot)\|_{C^{1, \alpha}(M)}\geq \epsilon_0, \quad \forall\ k\geq 1.
\end{align}
According to \eqref{3.43++} and the compact embedding theorem $C^{1, \beta}(M)\subset\subset C^{1,\alpha}(M)$, there exist a subsequence $t_{k_j}\rightarrow +\infty$ and a function $v\in C^{1, \alpha}(M)$ such that
\begin{align}\label{3.47--}
u(\cdot,t_{k_j})\rightarrow v \text{\ in\ } C^{1,\alpha}(M),
\end{align}
where $v$ is also a smooth solution of \eqref{1.7+} by the same argument as above. Set $w=v-u_\infty$. From \eqref{1.7+} and $g\leq (\not\equiv) 0$, we deduce that
\begin{align}\label{3.48++}
-\Delta^{Ch}_\eta w=g\left(e^{\frac{2}{n}v}-e^{\frac{2}{n}u_\infty}\right)=a(x)w,
\end{align}
where
\begin{align}
a(x)=\frac{2}{n}g(x)\int^1_0e^{\frac{2}{n}(sv+(1-s)u_\infty)}ds\leq (\not\equiv) 0.
\end{align}
Multiplying \eqref{3.48++} by $w$ and integrating over $M$ with respect to $\eta$ yields
\begin{equation}
\begin{aligned}
0\geq \int_M a w^2 d\mu_\eta&=-\int_M w\Delta_d wd\mu_\eta+\frac{1}{2}\int_M (dw^2, \theta)_\eta d\mu_\eta\\
&=\int_M|\nabla w|^2d\mu_\eta+\frac{1}{2}\int_M (w^2, d^*\theta)_\eta d\mu_\eta\\
&=\int_M|\nabla w|^2d\mu_\eta,
\end{aligned}
\end{equation}
and hence $w\equiv C_8$ is a constant, where we have used Lemma \ref{lemma2.1} and $d^*\theta=0$ above. By \eqref{3.48++} and $a \not\equiv  0$, we have $C_8=0$, i.e., $v=u_\infty$. Combining with \eqref{3.47--} yields
\begin{align}\label{3.51}
u(\cdot,t_{k_j})\rightarrow u_\infty \text{\ in\ } C^{1,\alpha}(M),
\end{align}
as $j\rightarrow +\infty$, which contradicts \eqref{3.46--}. Therefore, 
\begin{align}
\lim_{t\rightarrow +\infty}\|u(\cdot, t)-u_\infty(\cdot)\|_{C^{1, \alpha}(M)}=0.
\end{align}
\qed

\section*{Statements and Declarations}
\textbf{Data availability.} No data was used for the research described in the article.

\textbf{Conflict of interest.}  The author states that there is no conflict of interest. The author has no relevant financial or non-financial interests to disclose.

\bigskip
\bigskip

Weike Yu

School of Mathematical Sciences,

Ministry of Education Key Laboratory of NSLSCS,

Nanjing Normal University,

Nanjing, 210023, Jiangsu, P. R. China,

wkyu2018@outlook.com

\bigskip

\end{document}